\title[Coefficients of Coxeter polynomials of trees and bipartite quivers]
{On the coefficients of Coxeter polynomials of trees and
bipartite quivers}
\author{Niv Harel \and Sefi Ladkani}
\address{Department of Mathematics, University of Haifa,
199 Abba Khoushy Avenue, Mount Carmel, POB 3338, Haifa 3103301, Israel}
\email{ladkani.math@gmail.com}
\thanks{%
This work was carried out as an undergraduate research project
by the first-named author, under the supervision of the second-named author,
and was supported by the Center for Mathematics and Scientific Computation
at the University of Haifa.
}
\newcommand{\bZ}{\mathbb{Z}}
\newcommand{\wt}{\widetilde}
\DeclareMathOperator{\tr}{tr}
\theoremstyle{plain}
\newtheorem{theorem}{Theorem}[section]
\newtheorem*{theorem*}{Theorem}
\newtheorem{prop}[theorem]{Proposition}
\newtheorem{lemma}[theorem]{Lemma}
\newtheorem{cor}[theorem]{Corollary}
\newtheorem*{cor*}{Corollary}
\theoremstyle{definition}
\newtheorem{defn}[theorem]{Definition}
\newtheorem{remark}[theorem]{Remark}
\numberwithin{equation}{section}
\begin{document}

\begin{abstract}
We apply spectral graph theory and a theorem of A'Campo to express the
first and second coefficients of the Coxeter polynomials associated with
certain bipartite quivers in terms of the degrees of the vertices in
their underlying graphs.
As a consequence, we provide a new proof of a result by Happel, expressing
the second coefficient of the Coxeter polynomial of a tree in terms of its
vertex degrees.
\end{abstract}

\maketitle

\section{Introduction}

The Coxeter polynomial of an acyclic quiver is defined as the characteristic
polynomial of its Coxeter transformation, and its spectral properties
encode various representation-theoretic features of the associated path
algebra, see~\cite{dlPena94,dlPenaTakane90,Ringel94} and the
survey~\cite{LP08}.
Coxeter polynomials have been explicitly computed for various classes of
quivers and, in the case of trees, can be determined recursively using
the algorithm described in~\cite{Boldt95}.

It is an interesting problem to interpret specific coefficients of the
Coxeter polynomial in terms of the underlying combinatorics of the quiver.
As shown in~\cite{Happel97}, the first non-trivial coefficient admits an
interpretation via the Hochschild cohomology of the path algebra, which was
explicitly computed in~\cite[\S1.6]{Happel89}, thereby allowing a description
in terms of the numbers of paths running parallel to arrows,
see~\cite[Remark 4.7]{Happel09}.
In the same paper~\cite{Happel09}, Happel gave a description of the second 
coefficient of the Coxeter polynomial of a tree in terms of its vertex
degrees. His proof relies on a formula for the Coxeter polynomial of a
one-point extension, derived using techniques from the representation theory
of quivers and finite-dimensional algebras.

In this note we derive and present formulae for the first two coefficients
of the Coxeter polynomial of a bipartite quiver without parallel arrows.
Such a quiver on vertex set $\{1, 2, \dots, n\}$
is a directed graph that can be described as follows:
Let $0 \leq m \leq n$, and for each $1 \leq i \leq m$,
let $B_i \subseteq \{m+1, \dots, n\}$. Then, there is an
arrow $i \to j$ if and only if $j \in B_i$.

Let
\begin{align*}
e = \sum_{i=1}^{m} |B_i| &,&
q = \sum_{1 \leq i < j \leq m} \binom{|B_i \cap B_j|}{2} .
\end{align*}
Then $e$ is the number of arrows and $q$ counts the number of occurrences
of the quiver $\wt{A}_{2,2}$, formed by assigning a bipartite
orientation to the graph $K_{2,2}$ (see Figure~\ref{fig:K22}),
as a full subquiver.
In particular, $q=0$ if and only if $|B_i \cap B_j| \leq 1$ for any
$i \neq j$.
Using this notation, the degree $d_i$ of vertex $i$ can be written as
\[
d_i = \begin{cases}
|B_i| & \text{if $i \leq m$}, \\
\left| j \leq m \,:\, i \in B_j \right| & \text{otherwise.}
\end{cases}
\]

\begin{figure}
\[
\xymatrix{
{\bullet} \ar[d] \ar[dr] & {\bullet} \ar[d] \ar[dl] \\
{\bullet} & {\bullet}
}
\]
\caption{The bipartite quiver $\wt{A}_{2,2}$}
\label{fig:K22}
\end{figure}
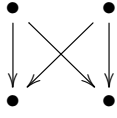

\begin{theorem} \label{t:Q}
Let $Q$ be a bipartite quiver on $n \geq 2$ vertices without parallel arrows,
and let
\[
\phi_Q(x) = x^n + a_1 x^{n-1} + a_2 x^{n-2} + \dots 
\]
be its Coxeter polynomial.
Then $a_1 = n - e$ and
\begin{equation} \label{e:Q:a2}
a_2 = \frac{(n-e)(n-1-e)}{2} + e - 2q - \sum_{i=1}^{n} \binom{d_i}{2} .
\end{equation}
\end{theorem}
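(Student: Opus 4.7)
The approach combines A'Campo's spectral description of the Coxeter polynomial of a bipartite quiver with a direct count of length-four closed walks in the underlying graph. The Cartan matrix can be written in block form
\[
C = \begin{pmatrix} I_m & M \\ 0 & I_{n-m} \end{pmatrix},
\]
where $M$ is the $m \times (n-m)$ source-to-sink incidence matrix with $M_{ij} = 1$ iff $j \in B_i$. Since $\det(C) = 1$, one has $\phi_Q(x) = \det(xC + C^T)$, and a Schur-complement computation on the block matrix
\[
xC + C^T = \begin{pmatrix} (x+1) I_m & xM \\ M^T & (x+1) I_{n-m} \end{pmatrix}
\]
yields the factorisation
\[
\phi_Q(x) = (x+1)^{n-2m}\,\det\!\bigl((x+1)^{2} I_m - x\, MM^T\bigr) .
\]
This is an instance of A'Campo's identity: it expresses the eigenvalue correspondence $\lambda + \lambda^{-1} + 2 = \mu^{2}$ between the spectrum of the Coxeter transformation and that of the adjacency matrix of the underlying graph, whose non-zero squared eigenvalues coincide with those of $MM^T$.

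Expanding the determinant in the elementary symmetric functions $e_k(MM^T)$ of the eigenvalues of $MM^T$ gives
\[
\phi_Q(x) = \sum_{k \ge 0} (-x)^{k}\, e_k(MM^T)\, (x+1)^{n-2k},
\]
and reading off the coefficients of $x^{n-1}$ and $x^{n-2}$ via the binomial theorem yields
\[
a_1 = n - e_1(MM^T), \qquad a_2 = \binom{n}{2} - (n-2)\,e_1(MM^T) + e_2(MM^T) .
\]
The diagonal entries $(MM^T)_{ii} = |B_i| = d_i$ give $e_1(MM^T) = \tr(MM^T) = e$, so $a_1 = n-e$ immediately. For $a_2$ I apply Newton's identity $2\,e_2(MM^T) = e^{2} - \tr((MM^T)^{2})$.

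The core of the proof is then the identity
\[
\tr\!\bigl((MM^T)^{2}\bigr) \;=\; e + 4q + 2\sum_{v=1}^{n}\binom{d_v}{2},
\]
which corresponds to the well-known spectral-graph-theoretic count of closed walks of length four in the underlying bipartite graph. It follows by writing
\[
\tr\!\bigl((MM^T)^{2}\bigr) = \sum_{i \le m} d_i^{2} + 2\!\sum_{1 \le i < k \le m} |B_i \cap B_k|^{2},
\]
rewriting $|B_i \cap B_k|^{2} = 2\binom{|B_i \cap B_k|}{2} + |B_i \cap B_k|$ to introduce $q$, employing the double count $\sum_{i<k} |B_i \cap B_k| = \sum_{j > m} \binom{d_j}{2}$, and the handshake-type relation $\sum_{i \le m} d_i^{2} = e + 2\sum_{i \le m}\binom{d_i}{2}$. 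Substituting and simplifying via the elementary identity $\binom{n}{2} - (n-2)e + \binom{e}{2} = \tfrac{(n-e)(n-1-e)}{2} + e$ then produces the claimed formula for $a_2$.

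The main obstacle is the combinatorial identity for $\tr((MM^T)^{2})$: the single matrix-theoretic quantity must be disentangled consistently into the three graph-theoretic invariants $e$, $q$, and $\sum_v \binom{d_v}{2}$ without double-counting contributions from repeated vertices or shared edges. Once this is in hand, the linear-algebraic steps (the Schur-complement factorisation, the symmetric-function expansion, and Newton's identity) are all routine.
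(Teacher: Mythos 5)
Your proof is correct, and it takes a genuinely different route from the paper. The paper invokes A'Campo's identity $\phi_Q(x^2)=x^np_G(x+x^{-1})$ (proved there via a nilpotent-matrix manipulation), then uses its representability machinery to express $a_1,a_2$ in terms of the coefficients $c_2,c_4$ of the characteristic polynomial of the full $n\times n$ adjacency matrix $A_G$, which are in turn computed from Newton's identities and a classification of the subgraphs traversed by closed walks of length $4$. You instead bypass both the representability framework and the matrix $A_G$: the Schur complement gives the self-contained factorisation $\phi_Q(x)=(x+1)^{n-2m}\det\bigl((x+1)^2I_m-x\,MM^T\bigr)$, after which everything is read off from the $m\times m$ Gram matrix $MM^T$, with the combinatorial core being the direct set-theoretic evaluation of $\tr\bigl((MM^T)^2\bigr)=e+4q+2\sum_v\binom{d_v}{2}$ (equivalently half of the paper's $\tr A_G^4$, since $A_G^2$ is block diagonal with blocks $MM^T$ and $M^TM$). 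I checked the key steps: the block determinant, the expansion $\phi_Q(x)=\sum_k(-x)^ke_k(MM^T)(x+1)^{n-2k}$ and the resulting $a_1=n-e_1$, $a_2=\binom{n}{2}-(n-2)e_1+e_2$, the identities $\sum_{i<k}|B_i\cap B_k|=\sum_{j>m}\binom{d_j}{2}$ and $\sum_{i\le m}d_i^2=e+2\sum_{i\le m}\binom{d_i}{2}$, and the final algebraic simplification; all are sound. Your approach buys a shorter, fully self-contained argument (no need to prove A'Campo's theorem or the coefficient-transfer formula separately) and works with a smaller matrix; the paper's approach buys generality, since its representability and walk-counting lemmas apply beyond this particular theorem and make the link to spectral graph theory explicit. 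The only cosmetic issue is the sign of the off-diagonal block ($C=I-N$ gives $-M$ rather than $M$), which is immaterial since only $MM^T$ enters.
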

The expression in~\eqref{e:Q:a2} can also be rewritten as
\begin{equation} \label{e:Q:a2v}
a_2 = \frac{(n-e)(n+1-e)}{2} - v - 2q - \sum_i \binom{d_i-1}{2} ,
\end{equation}
where $v$ is the number of isolated vertices in $Q$ and the sum
runs over all the non-isolated vertices.

\begin{cor}
If $Q$ is a bipartite quiver without parallel arrows that does not
contain $\wt{A}_{2,2}$ as a full subquiver,
then the coefficients $a_1$ and $a_2$ of its Coxeter polynomial
depend only on the degrees of the vertices in its underlying graph.
\end{cor}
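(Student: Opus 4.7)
The plan is to deduce the corollary directly from Theorem~\ref{t:Q} by observing that, under the stated hypothesis, every quantity on the right-hand side of the formulas for $a_1$ and $a_2$ is determined by the degree sequence alone.

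First, I would invoke the characterisation already recorded in the excerpt: the quantity $q$ counts full subquivers isomorphic to $\wt{A}_{2,2}$, and so the assumption that $Q$ contains no such subquiver is equivalent to $q = 0$. Substituting this into~\eqref{e:Q:a2} reduces the formula to
\[
a_2 = \frac{(n-e)(n-1-e)}{2} + e - \sum_{i=1}^{n} \binom{d_i}{2}.
\]

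Next, I would note that $n$ is simply the length of the degree sequence, and that the handshake identity $2e = \sum_{i=1}^n d_i$ (verified already in the setup, where $\sum_{i \leq m} d_i = e$ and $\sum_{i > m} d_i = e$) expresses the number of arrows $e$ as a function of the degrees. Consequently the quantities $n-e$ and $n-1-e$, as well as the sum $\sum_i \binom{d_i}{2}$, all depend only on the multiset $\{d_1,\dots,d_n\}$. Combining these observations, both $a_1 = n - e$ and the simplified expression for $a_2$ are functions of the degree sequence alone.

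The argument is essentially a substitution, so there is no serious obstacle; the only point requiring care is the translation between the combinatorial hypothesis (no $\wt{A}_{2,2}$ subquiver) and the vanishing of $q$, which is already justified in the paragraph preceding Theorem~\ref{t:Q}.
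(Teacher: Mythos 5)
Your proposal is correct and matches the paper's (implicit) argument: the corollary is stated without proof precisely because it follows from Theorem~\ref{t:Q} by setting $q=0$ (the hypothesis on $\wt{A}_{2,2}$ being equivalent to $q=0$, as noted before the theorem) and observing that $n$ and $e=\tfrac{1}{2}\sum_i d_i$ are determined by the degree sequence. Your explicit verification of the handshake identity from the $B_i$ setup is a nice touch but adds nothing beyond the intended reading.
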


Our proof is completely elementary and avoids the use of
the representation theory of quivers and their path algebras.
Under our hypotheses, the underlying graph $G$ of $Q$ is
a simple bipartite graph.
By A'Campo theorem~\cite{ACampo76}, we obtain an explicit relation
between the Coxeter polynomial $\phi_Q(x)$
and the characteristic polynomial $p_G(x)$ of the
adjacency matrix of $G$.
Spectral graph theory combined with Newton's identities then expresses
the coefficients of $p_G(x)$ in terms of counts of closed
walks in $G$, see for instance~\cite{CRS10}.
In particular, the first nontrivial coefficients depend
solely on the vertex degrees and the number of cycles of length $4$.

We recover Happel's result as a special case.
 
\begin{cor}[\protect{\cite[Theorem~4.8]{Happel09}}] \label{c:T}
Let $T$ be a tree on $n \geq 2$ vertices and let
\[
\phi_T(x) = x^n + a_1 x^{n-1} + a_2 x^{n-2} + \dots 
\]
be its Coxeter polynomial. Then $a_1 = 1$ and
\[
a_2 = 1 - \sum_{i=1}^{n} \binom{d_i-1}{2} .
\]
Hence $a_2 \leq 1$, and equality holds if and only if $T$ is the
Dynkin diagram $A_n$.
\end{cor}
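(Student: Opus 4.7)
The plan is to obtain the corollary as a direct specialization of Theorem~\ref{t:Q} to a bipartite orientation of the tree $T$. Since $T$ has no odd cycles, its underlying graph is bipartite, so we may orient all edges from one part of the bipartition to the other, producing a bipartite quiver (necessarily without parallel arrows, as $T$ is simple). I would invoke the classical fact that the Coxeter polynomial of a tree is independent of the chosen acyclic orientation---any two orientations of a tree are linked by a sequence of sink/source reflections, which conjugate Coxeter elements inside the Weyl group and thus preserve their characteristic polynomial---so Theorem~\ref{t:Q} applies to compute $\phi_T(x)$.

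Next I would read off the three combinatorial invariants $e$, $q$, $v$ for a tree on $n \geq 2$ vertices. Since $T$ has exactly $n-1$ edges, $e = n-1$; since $T$ has no cycles at all, it certainly contains no copy of $\wt{A}_{2,2}$ as a full subquiver, so $q = 0$; and since $T$ is connected with $n \geq 2$, it has no isolated vertices, so $v = 0$. Plugging $e = n-1$ into $a_1 = n - e$ yields $a_1 = 1$, and substituting $e = n-1$, $q = 0$, $v = 0$ into the reformulated expression~\eqref{e:Q:a2v} gives
\[
a_2 = \frac{1 \cdot 2}{2} - 0 - 0 - \sum_{i=1}^{n} \binom{d_i-1}{2} = 1 - \sum_{i=1}^{n} \binom{d_i-1}{2},
\]
as claimed.

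For the final assertion, I would observe that every summand $\binom{d_i-1}{2}$ is a non-negative integer, hence $a_2 \leq 1$, with equality precisely when $d_i - 1 \leq 1$ for every $i$, i.e.\ when every vertex of $T$ has degree at most~$2$. A connected graph with this property is a path, which is exactly the Dynkin diagram $A_n$.

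The only non-routine point is the invocation of orientation-independence of the Coxeter polynomial for trees, which justifies freely choosing a bipartite orientation; the remainder consists of recognizing $e$, $q$, and $v$ for trees and substituting into the formula provided by Theorem~\ref{t:Q}.
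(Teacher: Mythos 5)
Your proposal is correct and follows essentially the same route as the paper: choose a bipartite orientation of $T$ (justified by orientation-independence via reflections, which the paper records at the end of Section~\ref{sec:graph}), then substitute $e=n-1$, $q=0$, $v=0$ into the reformulated expression~\eqref{e:Q:a2v}. The concluding observation that $a_2 \leq 1$ with equality exactly for $A_n$ is also the intended argument.
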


The paper is structured as follows.
In Section~\ref{sec:represent} we explore the notion of representable
polynomials, introduced by Lenzing and de la Pe\~{n}a as a framework for
understanding A'Campo's result.
Section~\ref{sec:Coxeter} reviews the definition of the Coxeter polynomial
associated with an invertible matrix $C$, and shows that when $C=I-N$ with
$N^2=0$, the Coxeter polynomial is representable by the characteristic
polynomial of $N+N^T$, recovering A'Campo's result as a special case.
In Section~\ref{sec:graph} we discuss graphs, quivers and their associated
polynomials. In particular, we express the coefficients $c_1, c_2$ and
$c_4$ of the characteristic polynomial of a simple graph in terms of its
vertex degrees and the number of $4$-cycles.
Finally, Section~\ref{sec:proofs} assembles all ingredients to preset the
proofs of the main results.

\section{Representability of polynomials}
\label{sec:represent}

The notion of representability of polynomials was defined and studied by
Lenzing and de la Pe\~{n}a, see~\cite[\S3.2]{LP08} and~\cite{LP09}.
It provides a framework for A'Campo's result~\cite{ACampo76} relating the
Coxeter polynomial of a bipartite quiver to the characteristic polynomial
of its underlying graph.
In this section we use linear algebra to clarify and explore this concept.

Let $F$ be a field and let $V_n$ be the space of polynomials
of degree at most $n$ with coefficients in $F$.
\begin{defn}
A polynomial $p \in V_n$ is \emph{represented} by a polynomial
$q \in V_n$ if
\[
p(x^2) = x^n q(x + x^{-1}) .
\]
\end{defn}

\subsection{Linear subspaces of polynomials}

\begin{defn}
Define subspaces $W_n$ and $U_n$ of $V_n$ as follows.
\begin{align*}
W_n &= \left \{ c_0 x^n + c_1 x^{n-1} + \dots + c_n \in V_n
\,:\, 
\text{$c_i = c_{n-i}$ for any $0 \leq i \leq n$} \right\} ,
\\
U_n &= \left \{ c_0 x^n + c_1 x^{n-1} + \dots + c_n \in V_n
\,:\, 
\text{$c_i = 0$ for any odd $i$} \right\} . 
\end{align*}
Note that a polynomial $p \in V_n$ belongs to $W_n$ if and only if
$p(x) = x^n p(x^{-1})$.
\end{defn}

\begin{remark}
The elements of $W_n$ are sometimes called \emph{palindromic} 
(or \emph{self-reciprocal}), but note that this notion depends on the
ambient space $V_n$ that we work in. For instance,
while the polynomial $x$ is palindromic as an element in $V_2$, it is
not palindromic as an element in $V_1$.
\end{remark}

\begin{lemma} \label{l:dim}
$\dim W_n = \dim U_n = \dim (W_{2n} \cap U_{2n}) = 1 + \lfloor n/2 \rfloor$.
\end{lemma}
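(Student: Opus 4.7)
The plan is to establish each of the three equalities by exhibiting an explicit basis for the space in question. All three spaces are subspaces of $V_n$ (or $V_{2n}$) cut out by simple linear conditions on the coefficients, so the argument reduces to a parameter count.

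For $U_n$, a basis consists of the monomials $x^{n-i}$ with $i$ even, and counting the even integers in $\{0, 1, \dots, n\}$ immediately gives $1 + \lfloor n/2 \rfloor$. For $W_n$, the palindromic condition $c_i = c_{n-i}$ pairs the indices into unordered pairs $\{i, n-i\}$, so a basis is given by the symmetrized polynomials $x^{n-i} + x^i$ for $i < n-i$, together with $x^{n/2}$ when $n$ is even; the number of such pairs is $\lceil (n+1)/2 \rceil = 1 + \lfloor n/2 \rfloor$. For $W_{2n} \cap U_{2n}$, I would impose both constraints simultaneously inside $V_{2n}$: only even-indexed coefficients may be nonzero, and among those the palindromic condition reads $c_{2k} = c_{2(n-k)}$, which once more pairs the free parameters into unordered pairs $\{k, n-k\}$ with $k \in \{0, 1, \dots, n\}$, giving $1 + \lfloor n/2 \rfloor$ one more time.

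No step presents a genuine obstacle; the only thing to keep track of is the parity of $n$ (and of $2n$), which is absorbed uniformly by the floor function, though one can just as well split into cases. The coincidence of the three dimensions strongly suggests canonical isomorphisms between these spaces, induced by the substitutions $x \mapsto x^2$ and $x \mapsto x + x^{-1}$ appearing in the definition of representability, and I would expect these to be constructed and exploited in the remainder of the section.
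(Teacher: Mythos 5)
Your proof is correct and follows essentially the same route as the paper's: a parameter count on the coefficients cut out by the linear conditions, with the intersection $W_{2n}\cap U_{2n}$ handled by combining the two constraints so that the free parameters are $c_0, c_2, \dots, c_{2\lfloor n/2\rfloor}$. You merely spell out explicit bases for $W_n$ and $U_n$, which the paper dismisses as clear, and your closing remark about the maps $S$ and $T$ anticipates exactly what the paper does in Lemmas~\ref{l:ST:inc} and~\ref{l:ST:iso}.
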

\begin{proof}
We compute the dimension of the space $W_{2n} \cap U_{2n}$, as the other
assertions are clear.
A polynomial in $V_{2n}$ belongs to this space if and only if its
coefficients satisfy $c_i=0$ for odd $i$ and $c_{2n-i}=c_i$ for any
$0 \leq i \leq 2n$. Hence the coefficients are determined by the
sequence $c_0, c_2, \dots, c_{2 \lfloor n/2 \rfloor}$.
\end{proof}

Given a polynomial $p \in V_n$, define new polynomials $S(p)$ and $T(p)$
by
\begin{align*}
S(p)(x) &= p(x^2) , \\
T(p)(x) &= x^n p(x + x^{-1}) .
\end{align*}

\begin{lemma} \label{l:ST:inc}
There are inclusions
\begin{align*}
S(V_n) \subseteq U_{2n} &,& S(W_n) \subseteq W_{2n} &,&
T(V_n) \subseteq W_{2n} &,& T(U_n) \subseteq U_{2n} .
\end{align*}
\end{lemma}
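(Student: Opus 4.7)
The plan is to check each of the four inclusions in turn by a short direct computation; none of them requires any theory beyond the definitions of $S$, $T$, $U_n$, $W_n$, and the characterization $p(x) = x^n p(x^{-1})$ of $W_n$ noted after the definition.

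I would begin with the two $S$-inclusions, which are nearly immediate. The substitution $x \mapsto x^2$ doubles every exponent of a polynomial in $V_n$, so $p(x^2)$ contains only even powers of $x$ and has degree at most $2n$; this yields $S(V_n) \subseteq U_{2n}$. Applying the same substitution to the palindromic identity $p(x) = x^n p(x^{-1})$ yields $p(x^2) = x^{2n} p(x^{-2})$, which is exactly the palindromicity condition in $V_{2n}$, giving $S(W_n) \subseteq W_{2n}$.

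Next I would handle $T(V_n) \subseteq W_{2n}$ in two steps. First, for $p$ of degree at most $n$ the Laurent polynomial $p(x+x^{-1})$ is supported on exponents in $[-n, n]$, so multiplying by $x^n$ clears the negative exponents and produces a genuine polynomial of degree at most $2n$. Second, the invariance of $x + x^{-1}$ under $x \mapsto x^{-1}$ gives $x^{2n} T(p)(x^{-1}) = x^{n} p(x^{-1}+x) = T(p)(x)$, placing $T(p)$ in $W_{2n}$.

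The last inclusion $T(U_n) \subseteq U_{2n}$ is the one that requires the most care, and is where I expect the only real obstacle: a careful parity bookkeeping. The key observation is that expanding $(x+x^{-1})^k$ as a Laurent polynomial produces only exponents of the same parity as $k$. For $p \in U_n$, the nonzero monomials $y^{n-i}$ in $p(y)$ occur only at even $i$, so their exponents all have the parity of $n$. Substituting $y = x+x^{-1}$ therefore yields a Laurent polynomial whose exponents all have the parity of $n$, and multiplying by $x^n$ lands in the span of even powers, that is, in $U_{2n}$. The subtlety to be tracked here is that the condition ``$c_i = 0$ for odd $i$'' in the definition of $U_n$ refers to the coefficient index $i$ in $c_i x^{n-i}$ rather than to the exponent itself, so when $n$ is odd the surviving monomials are the odd powers; the argument above correctly accommodates both parities of $n$.
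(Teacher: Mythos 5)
Your proof is correct and follows essentially the same route as the paper: the two $S$-inclusions and $T(V_n)\subseteq W_{2n}$ are handled by the same direct substitutions, and your parity bookkeeping for $T(U_n)\subseteq U_{2n}$ is just the general-element form of the paper's computation $T(x^{n-2j}) = x^{2j}(x^2+1)^{n-2j}$ on basis elements. No gaps.
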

\begin{proof}
The first two inclusions are clear.

If $p \in V_n$, then $T(p)$ is a polynomial of degree at most
$2n$, and
\[
x^{2n}T(p)(x^{-1}) = x^{2n} x^{-n} p(x^{-1} + x) = T(p)(x) ,
\]
so $T(p) \in W_{2n}$.

For the last inclusion,
take a basis element $q(x) = x^{n-2j}$ for some
$j \leq \lfloor n/2
\rfloor$. Then
$T(q)(x) = x^n (x+x^{-1})^{n-2j} = x^{2j} (x^2 + 1)^{n-2j}$,
hence all the coefficients of odd powers vanish.
\end{proof}

\begin{lemma} \label{l:ST:iso}
There are well-defined linear isomorphisms
\begin{align*}
S \colon V_n \to U_{2n} &,&
S|_{W_n} \colon W_n \to W_{2n} \cap U_{2n} &,&
T \colon V_n \to W_{2n} &,&
T|_{U_n} \colon U_n \to W_{2n} \cap U_{2n} .
\end{align*}
\end{lemma}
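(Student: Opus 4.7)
The plan is to assemble the two preceding lemmas. Lemma~\ref{l:ST:inc} already establishes that each of the four maps sends its domain into the asserted codomain, so linearity and well-definedness come for free. What remains is bijectivity, and my strategy is to reduce this to a single injectivity check after a dimension count.

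First I would observe that in each of the four cases, domain and codomain have the same finite dimension. A direct count gives $\dim V_n = \dim U_{2n} = \dim W_{2n} = n+1$, while Lemma~\ref{l:dim} supplies $\dim W_n = \dim U_n = \dim(W_{2n} \cap U_{2n}) = 1 + \lfloor n/2 \rfloor$. Consequently, for each of the four maps it suffices to verify injectivity.

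Injectivity of $S$ is immediate, since the substitution $x \mapsto x^2$ sends the basis $\{x^{n-i}\}_{i=0}^{n}$ of $V_n$ to the linearly independent monomials $\{x^{2(n-i)}\}_{i=0}^{n}$ of $U_{2n}$. For $T$, I would apply the map to the same basis, using the computation already appearing in the proof of Lemma~\ref{l:ST:inc}: $T(x^{n-i})(x) = x^i(x^2+1)^{n-i}$, whose leading term is $x^{2n-i}$. Since the leading degrees are pairwise distinct as $i$ ranges over $\{0,1,\dots,n\}$, the images form a linearly independent set, so $T$ is injective. The restrictions $S|_{W_n}$ and $T|_{U_n}$ inherit injectivity from $S$ and $T$, and map into $W_{2n} \cap U_{2n}$ by Lemma~\ref{l:ST:inc}. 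Matching dimensions once more, the restrictions are isomorphisms as well.

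I do not anticipate a real obstacle: the argument is a standard combination of (i) the containments from Lemma~\ref{l:ST:inc}, (ii) the dimensions from Lemma~\ref{l:dim}, and (iii) an elementary leading-degree argument for injectivity. The only slightly delicate point is verifying the leading-term claim for the image of $T$, which is a single line.
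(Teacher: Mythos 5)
Your proposal is correct and follows essentially the same route as the paper: combine the containments from Lemma~\ref{l:ST:inc} with the dimension counts and conclude from injectivity. The paper simply asserts that the maps are one-to-one, whereas you supply the (valid) leading-degree argument for $T$ explicitly; this is a fair filling-in of a step the paper treats as clear.
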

\begin{proof}
By Lemma~\ref{l:ST:inc}, $S(W_n) \subseteq W_{2n} \cap U_{2n}$ and
$T(U_n) \subseteq W_{2n} \cap U_{2n}$. Hence, in each of the four cases,
the map is one-to-one and sends the source into the target,
which has the same dimension by Lemma~\ref{l:dim}.
\end{proof}

\begin{prop} \label{p:represent}
A polynomial $p \in V_n$ is represented by a polynomial
$q \in V_n$ if and only if $p \in W_n$. In this case, the representing
polynomial $q$ is unique and moreover $q \in U_n$.
\end{prop}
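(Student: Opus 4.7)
The plan is to reformulate the defining equation $p(x^2)=x^{n}q(x+x^{-1})$ of representability as the identity $S(p)=T(q)$ between elements of $V_{2n}$, and then read off everything from the isomorphisms established in Lemma~\ref{l:ST:iso}. Once this translation is made, both the characterization of representable polynomials and the uniqueness of the representing polynomial are essentially formal consequences of the diagram of inclusions and bijections among $V_n$, $U_n$, $W_n$, $U_{2n}$, $W_{2n}$ and their intersection.

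For the ``only if'' direction, I would argue as follows. Assume $p$ is represented by $q \in V_n$, so $S(p) = T(q)$. By Lemma~\ref{l:ST:inc}, $T(q) \in W_{2n}$, and on the other hand $S(p) \in U_{2n}$ automatically, so the common value lies in $W_{2n} \cap U_{2n}$. Since $S \colon V_n \to U_{2n}$ is an isomorphism by Lemma~\ref{l:ST:iso}, there is a \emph{unique} preimage of $S(p)$ in $V_n$, namely $p$ itself; but the restricted isomorphism $S|_{W_n} \colon W_n \to W_{2n}\cap U_{2n}$ also produces a preimage, which must coincide with $p$. Hence $p \in W_n$.

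For the ``if'' direction, suppose $p \in W_n$. Then $S(p) \in W_{2n} \cap U_{2n}$ by Lemma~\ref{l:ST:inc}, and by Lemma~\ref{l:ST:iso} the restricted map $T|_{U_n} \colon U_n \to W_{2n} \cap U_{2n}$ is a bijection, so there exists a unique $q \in U_n$ with $T(q) = S(p)$. This $q$ represents $p$, and moreover it lies in $U_n$, as claimed. Finally, for uniqueness of $q$ even as an element of the larger space $V_n$: if $q' \in V_n$ also satisfies $T(q') = S(p) = T(q)$, then injectivity of $T \colon V_n \to W_{2n}$ (again Lemma~\ref{l:ST:iso}) forces $q' = q$.

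I do not foresee any genuine obstacle here; the whole content of the proposition is already packaged into Lemma~\ref{l:ST:iso}, and the proof amounts to chasing the identity $S(p)=T(q)$ through the appropriate isomorphisms. The only small subtlety worth flagging explicitly is that uniqueness must be verified in $V_n$ rather than in $U_n$, which is why one needs the full isomorphism $T\colon V_n \to W_{2n}$ and not merely its restriction to $U_n$.
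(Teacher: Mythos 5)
Your proposal is correct and follows essentially the same route as the paper: both reduce representability to the identity $S(p)=T(q)$, locate the common value in $W_{2n}\cap U_{2n}$ via Lemma~\ref{l:ST:inc}, and then invoke the isomorphisms of Lemma~\ref{l:ST:iso} to conclude $p\in W_n$, $q\in U_n$, and uniqueness (the paper writes this as $q=(T^{-1}S)(p)$). Your extra remark that uniqueness must be checked in $V_n$ via the full isomorphism $T\colon V_n\to W_{2n}$ is a fair point of care, implicit in the paper's argument.
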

\begin{proof}
A polynomial $p \in V_n$ is represented by $q \in V_n$ if and only if
$S(p) = T(q)$. Now  $S(p) \in U_{2n}$ and $T(q) \in W_{2n}$ by
Lemma~\ref{l:ST:inc}.
As these elements are equal, they must belong to the intersection
$W_{2n} \cap U_{2n}$, so $p \in W_n$ and $q \in U_n$ by
Lemma~\ref{l:ST:iso}, as desired.
If $p \in W_n$, then $q = (T^{-1} S)(p)$ is the unique polynomial
representing it.
\end{proof}

\subsection{Relating the coefficients}

While Proposition~\ref{p:represent} establishes the existence and
uniqueness of the representing polynomial, subsequent calculations
require an explicit relation between the coefficients of a polynomial
and those of its representation.

Let $p, q \in V_n$ and write them as
$p(x) = \sum_{k=0}^{n} a_k x^{n-k}$ and
$q(x) = \sum_{j=0}^{n} c_j x^{n-j}$.

\begin{prop} \label{p:a:c}
If $p$ is represented by $q$, then $c_j = 0$ for any odd $j$, and
\begin{equation} \label{e:a:c}
a_k = a_{n-k} = \sum_{j=0}^{\min(k, n-k)} \binom{n-2j}{k-j} c_{2j}
\end{equation}
for any $0 \leq k \leq n$.
\end{prop}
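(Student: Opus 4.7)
The plan is to separate the two assertions. The vanishing statement $c_j = 0$ for odd $j$, together with the palindromic symmetry $a_k = a_{n-k}$, comes for free: Proposition~\ref{p:represent} already tells us that $p \in W_n$ and $q \in U_n$, and these properties are precisely the definitions of those subspaces. So the only real content is the explicit formula for $a_k$.

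For that, I would compute both sides of the identity $p(x^2) = x^n q(x+x^{-1})$ as honest polynomials in $x$ and match coefficients. The left-hand side is simply $\sum_{k=0}^{n} a_k x^{2(n-k)}$, so the coefficient of $x^{2\ell}$ equals $a_{n-\ell}$. For the right-hand side, the crucial observation, already used implicitly in the proof of Lemma~\ref{l:ST:inc}, is the identity $x^n(x+x^{-1})^{n-j} = x^j(x^2+1)^{n-j}$, which eliminates negative powers. Hence
\[
x^n q(x+x^{-1}) = \sum_{j=0}^{n} c_j \, x^j (x^2+1)^{n-j} = \sum_{i} c_{2i} \, x^{2i}(x^2+1)^{n-2i},
\]
where the second equality uses that $c_j = 0$ for odd $j$.

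Next I would expand each factor $(x^2+1)^{n-2i} = \sum_m \binom{n-2i}{m} x^{2m}$ by the binomial theorem, collect by total power of $x^2$, and match with the left-hand side. Writing $k = i+m$, the coefficient of $x^{2(n-k)}$ on the right comes out to $\sum_{i} \binom{n-2i}{n-k-i} c_{2i}$, over the appropriate range of $i$. To bring this into the stated form, I would either invoke the palindromic symmetry $a_k = a_{n-k}$ already established, or apply the binomial identity $\binom{n-2j}{n-k-j} = \binom{n-2j}{k-j}$ to flip the lower argument.

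I do not foresee any serious obstacle: the argument is essentially two binomial expansions glued together by a reindexing. The only thing requiring care is the bookkeeping of the summation bounds — translating the constraints $0 \leq m \leq n-2i$ and $i \geq 0$ into the clean condition $0 \leq j \leq \min(k, n-k)$ in the final formula — but this is routine.
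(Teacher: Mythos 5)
Your proposal is correct and follows essentially the same route as the paper: the vanishing and symmetry statements are read off from Proposition~\ref{p:represent}, and the formula comes from the identity $x^n(x+x^{-1})^{n-2j} = x^{2j}(x^2+1)^{n-2j}$ followed by a binomial expansion and reindexing (the paper organizes this by computing $T$ on the basis elements $x^{n-2j}$ and using linearity, which is the same computation term by term). The binomial symmetry $\binom{n-2j}{n-k-j}=\binom{n-2j}{k-j}$ and the range $0 \leq j \leq \min(k,n-k)$ check out as you describe.
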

\begin{proof}
Proposition~\ref{p:represent} implies that $q \in U_n$ and
$p \in W_n$, hence $c_j=0$ for odd $j$ and $a_k = a_{n-k}$ for
any $k$.
Since the maps $S$ and $T$ are linear, we can determine the
contribution of the coefficient $c_{2j}$ to $a_k$ by computing the
coefficient of $x^{2(n-k)}$ in $T(x^{n-2j})$.

As in the proof of Lemma~\ref{l:ST:inc},
\[
T(x^{n-2j}) = x^{2j}(x^2+1)^{n-2j}
= \sum_{i=0}^{n-2j} \binom{n-2j}{i} x^{2j+2(n-2j-i)}
= \sum_{k=j}^{n-j} \binom{n-2j}{k-j} x^{2(n-k)} ,
\]
hence
\[
a_k = \sum_{j=0}^{\min(k,n-k)} \binom{n-2j}{k-j} c_{2j} .
\]
\end{proof}

Let us write explicit expressions for the first coefficients:
\begin{align*}
a_0 &= c_0 \\
a_1 &= n c_0 + c_2 \\
a_2 &= \binom{n}{2} c_0 + (n-2) c_2 + c_4
\end{align*}
For $k \leq 2$, the expression for $a_k$ remains valid as long as
$n \geq k$, under the convention that $c_{2j}=0$ when $2j>n$.

\begin{remark}
Observe that the coefficient of $c_{2j}$ in expression~\eqref{e:a:c}
for $a_k$ is an integer for any $0 \leq j, k \leq \lfloor n/2 \rfloor$.
It equals $1$ when $j=k$ and vanishes when $j>k$.
Taking $F=\mathbb{Q}$, this implies that any polynomial in $W_n$ with integer
coefficients can be represented by a polynomial with integer coefficients,
see also~\cite[Prop.~4.18]{Ladkani23}.
\end{remark}

\section{Coxeter polynomials and characteristic polynomials}
\label{sec:Coxeter}

\subsection{The Coxeter formalism}
We briefly review the Coxeter formalism, which assigns a polynomial to
a square matrix under certain conditions,
see the surveys~\cite{Ladkani23} and~~\cite{LP08}.
Let $F$ be a field, let $n \geq 1$ and let $M_n(F)$ be the space of
square matrices of size $n$ with entries in $F$.
Given a matrix $C \in M_n(F)$, consider the polynomial in $F[x]$
defined by
\[
\phi_C(x) = \det(x \cdot C + C^T) .
\]

\begin{lemma} \label{l:Cox:Wn}
Let $C \in M_n(F)$. Then
\begin{enumerate}[(a)]
\item \label{it:phi:CT}
$\phi_{C^T}(x) = \phi_C(x)$,

\item \label{it:phi:Wn}
$\phi_C(x) \in W_n$.
\end{enumerate}
\begin{proof}
Part~\ref{it:phi:CT} is clear, since $xC^T + C = (xC + C^T)^T$.
For~\ref{it:phi:Wn}, note that
\[
x^n \phi_C(x^{-1}) = x^n \det (x^{-1} C + C^T)
= \det (C + x C^T) = \phi_{C^T}(x)\]
and use~\ref{it:phi:CT}.
\end{proof}

\end{lemma}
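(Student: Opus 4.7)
The plan is to dispatch (a) as a one-line transpose identity and then reduce (b) to (a) by pulling a scalar factor inside a determinant. Both parts should follow from elementary linear algebra, with nothing deeper than $\det A = \det A^T$ and the characterization of $W_n$ recorded earlier in the excerpt.

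For part (a), I would observe that $x C^T + C = (x C + C^T)^T$. Taking determinants of both sides and using $\det A = \det A^T$ collapses the identity $\phi_{C^T}(x) = \phi_C(x)$ to a single step.

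For part (b), I would invoke the characterization that $p \in W_n$ is equivalent to $p(x) = x^n p(x^{-1})$, which is noted right after the definition of $W_n$. Starting from $x^n \phi_C(x^{-1}) = x^n \det(x^{-1} C + C^T)$ and absorbing $x^n$ inside the determinant via $x^n \det(A) = \det(x A)$ for $A \in M_n(F)$, I would obtain $\det(C + x C^T) = \phi_{C^T}(x)$. Part (a) then identifies this with $\phi_C(x)$, finishing the argument.

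The main obstacle is effectively nil: the entire proof is a manipulation of determinants under transposition and scalar multiplication. The only mental check I would make is that $\phi_C$ genuinely lies in the ambient space $V_n$ with matching leading coefficient and constant term (both equal $\det C$), so that the palindromic criterion applies without any degree subtlety even when $C$ fails to be invertible.
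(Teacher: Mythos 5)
Your proposal is correct and follows essentially the same argument as the paper: part (a) via the transpose identity $xC^T + C = (xC + C^T)^T$, and part (b) by computing $x^n \phi_C(x^{-1}) = \det(C + xC^T) = \phi_{C^T}(x)$ and invoking (a). The extra sanity check that $\phi_C \in V_n$ with leading and constant coefficients both equal to $\det C$ is a harmless addition the paper leaves implicit.
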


\begin{lemma} \label{l:Cox:S}
Let $C \in M_n(F)$.
\begin{enumerate}[(a)]
\item
If there exists $S \in M_n(F)$ such that $C^T = -C S$, then
\[
\phi_C(x) = \det(C) \cdot p_S(x) ,
\]
where $p_S(x)$ denotes the characteristic polynomial of $S$.

\item \label{it:S}
If $C$ is invertible then there exists a unique
$S \in M_n(F)$ such that $C^T = -C S$ and it is given by
$S = - C^{-1} C^T$.
\end{enumerate}
\end{lemma}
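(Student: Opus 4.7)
The plan is to prove each part by direct manipulation of the defining identity $C^T = -CS$.

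For part (a), I would substitute the relation $C^T = -CS$ into the defining formula for $\phi_C(x)$. This gives
\[
\phi_C(x) = \det(xC + C^T) = \det(xC - CS) = \det\bigl(C(xI - S)\bigr),
\]
after factoring $C$ out on the left. By multiplicativity of the determinant, this equals $\det(C) \cdot \det(xI - S) = \det(C) \cdot p_S(x)$, which is the desired identity.

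For part (b), assuming $C$ is invertible, the equation $C^T = -CS$ can be rearranged by left-multiplying both sides by $C^{-1}$ to yield $C^{-1}C^T = -S$, hence $S = -C^{-1}C^T$; conversely, this $S$ clearly satisfies the defining relation. Since left-multiplication by $C^{-1}$ is a bijection on $M_n(F)$, the solution is unique.

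There is essentially no obstacle here: both parts follow from elementary linear algebra once the correct factorization is identified. The only point worth flagging is ensuring the factorization in (a) is carried out on the correct side (left-factoring $C$ out of $xC - CS$), so that the remaining factor $xI - S$ has its characteristic polynomial recognizable in the standard form.
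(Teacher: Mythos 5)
Your proof is correct and is essentially identical to the paper's: both substitute $C^T=-CS$ into $\det(xC+C^T)$, factor $C$ out on the left to get $\det(C)\det(xI-S)$, and note that part (b) is immediate from invertibility of $C$. Nothing further is needed.
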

\begin{proof}
We can write
$\phi_C(x) = \det (x C - CS) = \det(C) \det(x I_n - S) 
= \det(C) \cdot p_S(x)$.
The other assertion is clear.
\end{proof}

\begin{remark}
The matrix $S$ occurring in part~\ref{it:S} is known as the
\emph{Coxeter transformation} associated with $C$.
In particular, if $\det(C)=1$ then $\phi_C(x) = p_S(x)$ and this
polynomial is known as the \emph{Coxeter polynomial} of $C$.
\end{remark}

We now review the invariance of the Coxeter polynomial under
congruency.
Recall that two matrices $C, C' \in M_n(F)$ are \emph{congruent}
if there exists an invertible matrix $P \in M_n(F)$ such that
$C' = P C P^T$.

\begin{lemma} \label{l:Cox:cong}
Let $C, C' \in M_n(F)$.
\begin{enumerate}[(a)]
\item \label{it:cong:P}
If $C' = P C P^T$ for some $P \in M_n(F)$, then
$\phi_{C'}(x) = (\det P)^2 \cdot \phi_C(x)$.

\item \label{it:cong:C}
If $C$ is invertible, then it is congruent to $(C^T)^{-1}$.

\item
If $C, C'$ are invertible and congruent, then their Coxeter transformations
are similar and in particular they have the same characteristic polynomial.
\end{enumerate}
\end{lemma}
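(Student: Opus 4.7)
The three assertions are all routine consequences of the explicit determinantal definition of $\phi_C$ together with the formula $S = -C^{-1}C^T$ recorded in Lemma~\ref{l:Cox:S}\ref{it:S}. The plan is to treat each part by a direct matrix manipulation.

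For part~\ref{it:cong:P}, I would substitute $C' = PCP^T$ into the definition $\phi_{C'}(x) = \det(xC' + (C')^T)$ and factor out $P$ on the left and $P^T$ on the right. Since $(PCP^T)^T = P C^T P^T$, we obtain
\[
\phi_{C'}(x) = \det\!\bigl(P(xC + C^T)P^T\bigr) = (\det P)^2 \cdot \phi_C(x)
\]
by multiplicativity of the determinant.

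For part~\ref{it:cong:C}, the task is to exhibit an invertible $P$ such that $PCP^T = (C^T)^{-1}$. The natural candidate, suggested by trying to cancel one factor of $C$ against its transpose, is $P = (C^T)^{-1}$, and the direct check
\[
PCP^T = (C^T)^{-1} \cdot C \cdot C^{-1} = (C^T)^{-1}
\]
confirms that this works. This $P$ is invertible since $C$ is.

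For part~(c), assume $C' = PCP^T$ with $P$ invertible, and apply Lemma~\ref{l:Cox:S}\ref{it:S} to both $C$ and $C'$ to get the Coxeter transformations $S = -C^{-1}C^T$ and $S' = -(C')^{-1}(C')^T$. Expanding $S'$ yields
\[
S' = -(P^T)^{-1} C^{-1} P^{-1} \cdot P C^T P^T = (P^T)^{-1} \bigl(-C^{-1} C^T\bigr) P^T = (P^T)^{-1} S \, P^T,
\]
so $S$ and $S'$ are similar, hence have the same characteristic polynomial.

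None of the three steps presents a real obstacle; the only thing that requires care is keeping track of transposes and inverses in part~(c), where one must verify that the $P$-factors pair up correctly to yield a conjugation rather than a more general equivalence. Part~\ref{it:cong:P} could alternatively be deduced as a consequence of parts (b) and (c) by comparing $\phi_C$ and $p_S$ via Lemma~\ref{l:Cox:S}, but the direct determinantal computation is shorter and also covers the non-invertible case.
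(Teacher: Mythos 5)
Your proposal is correct and follows essentially the same route as the paper: part (a) by factoring $P$ and $P^T$ out of the determinant, part (c) by the identical computation $-C'^{-1}C'^T = (P^T)^{-1}(-C^{-1}C^T)P^T$, and part (b) by exhibiting an explicit congruence (the paper writes $C = C\,(C^T)^{-1}\,C^T$, while you use $P=(C^T)^{-1}$ in the other direction, which is the same observation up to the symmetry of congruence).
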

\begin{proof}
\begin{enumerate}[(a)]
\item
This follows from the equalities
\[
\det(xC' + C'^T) = \det(x PCP^T + (PCP^T)^T)
= \det(P) \det(xC + C^T) \det(P^T) .
\]

\item
We can write $C = C (C^T)^{-1} C^T$.

\item
If $C' = P C P^T$, then
\[
-C'^{-1} C'^T =
-(P C P^T)^{-1} (P C P^T)^T = (P^T)^{-1} (-C^{-1} C^T) P^T .
\]
\end{enumerate}
\end{proof}

For matrices with integer entries, the preceding results can be sharpened
as follows.
\begin{lemma} \label{l:Cox:Z}
Let $C, C' \in M_n(\bZ)$.
\begin{enumerate}[(a)]
\item \label{it:CoxZ:cong}
If $C, C'$ are congruent over $\bZ$, then $\phi_C(x) = \phi_{C'}(x)$.

\item \label{it:CoxZ:C}
If $C$ is invertible over $\bZ$, then $C^{-1}, C^T$ are congruent
over $\bZ$ and hence
\[
\phi_{C^{-1}}(x) = \phi_{C^T}(x) = \phi_C(x) .
\]
\end{enumerate}
\end{lemma}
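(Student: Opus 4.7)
The plan is to reduce both parts to results already established in Lemma~\ref{l:Cox:cong}, exploiting the fact that congruence over $\bZ$ forces the transition matrix to lie in $GL_n(\bZ)$, so its determinant is $\pm 1$.

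For part~\ref{it:CoxZ:cong}, I would invoke Lemma~\ref{l:Cox:cong}\ref{it:cong:P}: if $C' = PCP^T$ with $P \in M_n(\bZ)$ and $P^{-1} \in M_n(\bZ)$, then $\det P \in \{\pm 1\}$, so $(\det P)^2 = 1$ and the formula $\phi_{C'}(x) = (\det P)^2 \phi_C(x)$ collapses to $\phi_{C'}(x) = \phi_C(x)$. This is the only place where the hypothesis of integrality is genuinely used.

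For part~\ref{it:CoxZ:C}, the aim is to exhibit an explicit $\bZ$-congruence between $C^{-1}$ and $C^T$ and then apply part~\ref{it:CoxZ:cong}. The key observation is that the identity $A = A (A^T)^{-1} A^T$ used in the proof of Lemma~\ref{l:Cox:cong}\ref{it:cong:C} shows that any invertible matrix $A$ is congruent to $(A^T)^{-1}$ via the transition matrix $A$ itself. Applying this with $A = C^{-1}$ gives that $C^{-1}$ is congruent to $((C^{-1})^T)^{-1} = C^T$ via the transition matrix $C^{-1}$. Since $C$ is invertible over $\bZ$, the matrix $C^{-1}$ has integer entries, so this congruence takes place over $\bZ$. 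Part~\ref{it:CoxZ:cong} then yields $\phi_{C^{-1}}(x) = \phi_{C^T}(x)$, and the second equality $\phi_{C^T}(x) = \phi_C(x)$ is immediate from Lemma~\ref{l:Cox:Wn}\ref{it:phi:CT}.

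There is no real obstacle here: the statement is essentially a bookkeeping consequence of the earlier lemmas, and the only subtle point is recognizing that the congruence witnessing $A \cong (A^T)^{-1}$ is implemented by $A$ itself, so that integrality of $C^{-1}$ automatically provides an integral congruence between $C^{-1}$ and $C^T$.
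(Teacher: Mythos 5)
Your proposal is correct and follows essentially the same route as the paper: part~\ref{it:CoxZ:cong} via $\det P = \pm 1$ in Lemma~\ref{l:Cox:cong}\ref{it:cong:P}, and part~\ref{it:CoxZ:C} via the explicit congruence identity from the proof of Lemma~\ref{l:Cox:cong}\ref{it:cong:C} (instantiated at $C^{-1}$) combined with part~\ref{it:CoxZ:cong} and Lemma~\ref{l:Cox:Wn}. Your write-up is in fact slightly more explicit than the paper's about which matrix implements the integral congruence.
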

\begin{proof}
Part~\ref{it:CoxZ:cong}
follows from Lemma~\ref{l:Cox:cong}\ref{it:cong:P},
once we note that
any matrix which is invertible over $\bZ$ has determinant equal to
$\pm 1$.
Part~\ref{it:CoxZ:C} follows by combining (the proof of) Lemma~\ref{l:Cox:cong}\ref{it:cong:C}, part~\ref{it:CoxZ:cong}
and Lemma~\ref{l:Cox:Wn}.
\end{proof}

\subsection{Representing Coxeter polynomials by characteristic polynomials}

The Coxeter polynomial of a matrix with determinant $1$ 
is equal to the characteristic polynomial of its Coxeter transformation.
In this section we show that for any square matrix $N$ with $N^2=0$,
the Coxeter polynomial of $I-N$ is represented by the characteristic
polynomial of $N+N^T$.
When $N$ is the adjacency matrix of a bipartite quiver, we recover
the result of~\cite{ACampo76}.

As before, let $F$ be a field and let $n \geq 1$.
We start by recording the following easy observation.
\begin{lemma} \label{l:nilpotent}
If $N \in M_n(F)$ is nilpotent, then so is $a N$ for any
$a \in F$ and moreover, $\det (I_n - a N) = 1$.
\end{lemma}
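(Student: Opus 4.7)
The plan is to handle the two assertions separately, each by a short direct argument.

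For nilpotence of $aN$: fix $k$ with $N^k = 0$. Since scalars commute with $N$, we have $(aN)^k = a^k N^k = 0$, so $aN$ is nilpotent of index at most $k$. This disposes of the first claim.

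For the determinant, the idea is to identify the characteristic polynomial of $aN$. Since $aN$ is nilpotent, by Cayley--Hamilton (or directly) we have $(aN)^n = 0$, and over the algebraic closure of $F$ every eigenvalue of $aN$ is zero. Hence the characteristic polynomial of $aN$ is $\det(xI_n - aN) = x^n$. Substituting $x=1$ yields $\det(I_n - aN) = 1$, as desired. If one prefers to stay inside $F$, the same conclusion follows by triangularizing $aN$ over an algebraic closure: $aN$ is similar to a strictly upper-triangular matrix $T$, so $I_n - aN$ is similar to $I_n - T$, an upper-triangular matrix with diagonal entries all equal to $1$, and determinants are preserved under similarity and under field extension.

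I do not anticipate any real obstacle here; the statement is essentially an exercise, recorded for use in the next subsection where it guarantees that $\det(I_n - N) = 1$ so that $\phi_{I_n - N}(x)$ is a genuine Coxeter polynomial (a monic polynomial equal to the characteristic polynomial of the associated Coxeter transformation, per the remark following Lemma~\ref{l:Cox:S}).
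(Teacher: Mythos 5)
Your proof is correct; the paper actually records this lemma as an ``easy observation'' with no proof at all, so there is nothing to diverge from. Both halves of your argument (the computation $(aN)^k = a^kN^k = 0$ and the identification of the characteristic polynomial of a nilpotent matrix as $x^n$, evaluated at $x=1$) are the standard ones and fill in the omitted details cleanly.
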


The key result is summarized in the next proposition.
\begin{prop} \label{p:N1N2}
Let $N_1, N_2 \in M_n(F)$ and assume that $N_1^2 = N_2^2 = 0$.
Then
\begin{equation} \label{e:N1N2}
\det \left( (x^2+1)I_n - x N_1 - x N_2 \right)
= \det \left( (x^2+1) I_n - x^2 N_1 - N_2 \right) .
\end{equation}
\end{prop}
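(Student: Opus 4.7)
My plan is to collapse both sides of \eqref{e:N1N2} to the determinant of a common smaller matrix of size $r_1 + r_2$ (where $r_i = \operatorname{rank} N_i$), using the Sylvester determinant identity together with rank factorizations of $N_1$ and $N_2$.

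Choose rank factorizations $N_i = X_i Y_i^T$ with $X_i, Y_i \in M_{n \times r_i}(F)$ of rank $r_i$. Since $X_i$ has full column rank and $Y_i^T$ has full row rank, the hypothesis $N_i^2 = X_i (Y_i^T X_i) Y_i^T = 0$ is equivalent to the small identity $Y_i^T X_i = 0$. Assembling $X = (X_1 \mid X_2)$ and $Y = (Y_1 \mid Y_2)$ in $M_{n \times (r_1+r_2)}(F)$, these vanishings mean that
\[
Y^T X = \begin{pmatrix} 0 & P \\ Q & 0 \end{pmatrix}, \qquad P = Y_1^T X_2, \quad Q = Y_2^T X_1,
\]
so the cross blocks $P$ and $Q$ carry all the surviving information.

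For the left-hand side of \eqref{e:N1N2}, I would use $N_1 + N_2 = X Y^T$ and pass to $F(x)$: Sylvester's identity $\det(I_n - AB) = \det(I_k - BA)$, applied with $A = X$ and $B = \tfrac{x}{x^2+1} Y^T$, reduces it to $(x^2+1)^n \det(I_{r_1+r_2} - \tfrac{x}{x^2+1}\, Y^T X)$. For the right-hand side, I would instead use the alternative factorization $x^2 N_1 + N_2 = X \tilde Y^T$ with $\tilde Y = (x^2 Y_1 \mid Y_2)$; the same Sylvester step, now with scalar $\tfrac{1}{x^2+1}$, produces an analogous expression involving $\tilde Y^T X = \bigl(\begin{smallmatrix} 0 & x^2 P \\ Q & 0 \end{smallmatrix}\bigr)$. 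A single Schur complement on each of these $2 \times 2$ block matrices (eliminating the off-diagonal blocks against $I_{r_2}$ in the bottom-right) reduces both sides to
\[
(x^2+1)^n \det\!\left( I_{r_1} - \tfrac{x^2}{(x^2+1)^2}\, PQ \right),
\]
and \eqref{e:N1N2} follows at once.

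I expect the main obstacle to be purely bookkeeping: the weights $x$, $x^2$, and $1$ are distributed differently between $X$ and $\tilde Y$ on the two sides, but after the Schur complement only the product of the two off-diagonal scalar weights matters, and in both cases this product works out to $x^2/(x^2+1)^2$. Since Sylvester's identity and rank factorizations are available over any field, the plan avoids any characteristic-specific ingredient such as Newton's identities or a specialization to $\mathbb{Q}$.
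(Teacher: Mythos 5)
Your argument is correct, and it takes a genuinely different route from the paper. The paper's proof exploits only nilpotency and multiplicativity of the determinant: since $\det(I_n-\alpha N_1)=\det(I_n-\beta N_2)=1$ over $F(x)$, one may multiply the left-hand matrix on the right by $I_n-\alpha N_1$ and the right-hand matrix on the left by $I_n-\beta N_2$, and the specific choices $\alpha=(x^2-x)/(x^2+1)$, $\beta=(x-1)/(x^2+1)$ make the two products literally equal as matrices (both become $(x^2+1)I_n-x^2N_1-xN_2+\tfrac{x^3-x^2}{x^2+1}N_2N_1$). Your proof instead converts the hypothesis $N_i^2=0$ into the small identities $Y_i^TX_i=0$ via rank factorizations, and then uses Sylvester's identity $\det(I_n-AB)=\det(I_k-BA)$ followed by a Schur complement to collapse \emph{both} sides to the common expression $(x^2+1)^n\det\bigl(I_{r_1}-\tfrac{x^2}{(x^2+1)^2}PQ\bigr)$; I checked the bookkeeping and the two off-diagonal weights do multiply to $x^2/(x^2+1)^2$ on each side. (Sylvester's identity is a polynomial identity valid over any commutative ring, so working with the $F[x]$-matrix $\tilde Y$ causes no difficulty, and the degenerate cases $r_i=0$ are handled by empty blocks.) The paper's method is shorter and uses nothing beyond $\det(I-aN)=1$ for nilpotent $N$, but it rests on a choice of $\alpha,\beta$ that appears out of thin air; your method is more structural, explains \emph{why} only the product of the two scalar weights matters, and as a byproduct exhibits both determinants explicitly as $(x^2+1)^{n-r_1}$ times the characteristic-polynomial-like expression $\det\bigl((x^2+1)^2 I_{r_1}-x^2PQ\bigr)\cdot(x^2+1)^{-r_1}$, which makes the palindromic structure in $x$ visible.
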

\begin{proof}
We view the matrices whose determinants we would like to evaluate as
matrices with entries in the field $F(x)$ of rational functions
in one variable over $F$. Applying Lemma~\ref{l:nilpotent} for the
matrices $N_1$ and $N_2$ over the field $F(x)$, we deduce that the
determinants $\det(I_n - \alpha N_1)$ and $\det(I_n - \beta N_2)$
are equal to $1$ for any $\alpha, \beta \in F(x)$.

Therefore, the left hand side of~\eqref{e:N1N2} equals the
determinant of any matrix
\[
\left((x^2+1)I_n - x N_1 - x N_2)\right) (I_n - \alpha N_1) =
(x^2+1) I_n - (x + \alpha (x^2+1)) N_1 - x N_2 + x \alpha N_2 N_1
\]
(using $N_1^2=0$), and similarly the right hand side of~\eqref{e:N1N2}
equals the determinant of any matrix
\[
(I_n - \beta N_2) \left( (x^2+1) I_n - x^2 N_1 - N_2 \right) =
(x^2+1) I_n - x^2 N_1 - (1 + \beta (x^2+1)) N_2 + \beta x^2 N_2 N_1 .
\]

By choosing $\alpha = (x^2-x)/(x^2+1)$ and $\beta = (x-1)/(x^2+1)$
we get the desired equality.
\end{proof}

\begin{theorem} \label{t:NN}
Let $N \in M_n(F)$ and assume that $N^2=0$.
Consider the matrices $C = I_n - N$ and $A = N + N^T$
with Coxeter polynomial $\phi_C(x)$ and characteristic polynomial
$p_A(x)$, respectively. Then
\[
\phi_C(x^2) = x^n p_A(x + x^{-1}) .
\]
\end{theorem}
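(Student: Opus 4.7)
The plan is to massage both sides of the claimed identity into a common form that matches the two sides of equation~\eqref{e:N1N2} in Proposition~\ref{p:N1N2}, applied with $N_1 = N$ and $N_2 = N^T$.

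First I would expand the left hand side directly from the definition of the Coxeter polynomial. Substituting $C = I_n - N$ into $\phi_C(y) = \det(yC + C^T)$ and then evaluating at $y = x^2$ yields
\[
\phi_C(x^2) = \det\bigl(x^2(I_n - N) + (I_n - N^T)\bigr)
= \det\bigl((x^2+1) I_n - x^2 N - N^T\bigr) ,
\]
which is exactly the right hand side of~\eqref{e:N1N2} with $N_1 = N$, $N_2 = N^T$.

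Next I would rewrite the right hand side. Starting from $p_A(y) = \det(yI_n - A)$, I use the scaling identity $x^n \det(M) = \det(xM)$ for an $n \times n$ matrix $M$ to pull the prefactor $x^n$ inside the determinant:
\[
x^n p_A(x + x^{-1}) = x^n \det\bigl((x + x^{-1}) I_n - N - N^T\bigr)
= \det\bigl((x^2+1) I_n - x N - x N^T\bigr) ,
\]
which is precisely the left hand side of~\eqref{e:N1N2} for the same choice of $N_1, N_2$.

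Finally, since $(N^T)^2 = (N^2)^T = 0$, both matrices $N_1 = N$ and $N_2 = N^T$ satisfy the hypothesis of Proposition~\ref{p:N1N2}, so equation~\eqref{e:N1N2} gives the desired equality of the two expressions.

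There is no real obstacle; the three steps are essentially bookkeeping, and the substantive content has already been isolated in Proposition~\ref{p:N1N2}. The only point that requires a bit of attention is the scaling trick $x^n \det(M) = \det(xM)$, which is what allows the factor $x^n$ in front of $p_A(x+x^{-1})$ to be absorbed into the matrix and line up with the form appearing in Proposition~\ref{p:N1N2}.
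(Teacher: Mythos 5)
Your proposal is correct and follows essentially the same route as the paper: both reduce the identity to Proposition~\ref{p:N1N2} with $N_1 = N$, $N_2 = N^T$ by expanding $\phi_C(x^2)$ from the definition and absorbing the factor $x^n$ into the determinant on the other side. The explicit check that $(N^T)^2 = 0$ is a nice touch the paper leaves implicit.
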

\begin{proof}
By definition,
\[
\phi_C(x^2) = \det(x^2 C + C^T)
= \det \left( (x^2+1) I_n - x^2 N - N^T \right).
\]
On the other hand,
\[
x^n p_A(x + x^{-1}) = x^n \det \left((x+x^{-1})I_n - A \right)
= \det \left((x^2+1) I_n - x N - x N^T \right)
\]
and the result follows by applying Proposition~\ref{p:N1N2} for 
$N_1 = N$ and $N_2 = N^T$.
\end{proof}

\section{Graphs, quivers and their polynomials}
\label{sec:graph}

\subsection{Graphs and their characteristic polynomials}

Let $G$ be a graph on vertex set $\{1, 2, \dots, n\}$,
where loops and multiple edges are allowed at this stage.
Recall that the \emph{adjacency matrix} $A_G$ of $G$ is the square
matrix of size $n$ whose entries $(A_G)_{ij}$ count the number of
edges between vertices $i$ and $j$ in $G$.
For any integer $k \geq 0$, the entry $(A_G^k)_{ij}$ equals the number of
walks of length $k$ from vertex $i$ to vertex $j$ \cite[Prop.~1.3.4]{CRS10}.
In particular, $\tr A_G^k$ equals the number of closed walks of length $k$ in
$G$~\cite[Theorem~3.1.1]{CRS10}.

The classical Newton's identities allow to express the coefficients
of the characteristic polynomial of a square matrix in terms of
its traces. Namely, let $A$ be a square matrix with characteristic
polynomial $p_A(x) = c_0 x^n + c_1 x^{n-1} + \dots + c_n$.
Then $c_0 = 1$ and
\begin{equation} \label{e:Newton}
k c_k = - \sum_{i=1}^{k} c_{k-i} \tr A^i
\end{equation}
for any $1 \leq k \leq n$.
For a combinatorial proof, see~\cite{Zeilberger84}.

Let $G$ be a graph, denote by $p_G(x)$ the characteristic polynomial of
its adjacency matrix $A_G$, and write it as
\[
p_G(x) = x^n + c_1 x^{n-1} + c_2 x^{n-2} + \dots + c_n .
\]
Newton's identities can be used to provide combinatorial interpretations
of the coefficients $c_k$ in terms of walk counts in the graph $G$.
We illustrate this connection
through the following calculations which will be useful in
the subsequent sections.

\begin{lemma} \label{l:G:c124}
Assume that $G$ is simple.
Then $\tr A_G = 0$ and $\tr A_G^2 = 2e$, where $e$ denotes the number of
edges in $G$. Therefore, $c_1 = 0$, $c_2 = -e$ and
\[
c_4 = \frac{1}{2} e^2 - \frac{1}{4} \tr A_G^4 .
\]
\end{lemma}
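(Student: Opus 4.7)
The plan is to interpret the diagonal entries of $A_G$ and $A_G^2$ combinatorially using the walk-counting fact recalled just before the lemma, and then feed the resulting traces into Newton's identities~\eqref{e:Newton}.

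First I would observe that since $G$ is simple (no loops in particular), the adjacency matrix $A_G$ has zero diagonal, so $\tr A_G = 0$. For $\tr A_G^2$, the $(i,i)$ entry counts closed walks of length $2$ starting at vertex $i$; in a simple graph, such a walk is determined by choosing a neighbor of $i$, so $(A_G^2)_{ii} = d_i$. Therefore $\tr A_G^2 = \sum_{i=1}^n d_i = 2e$ by the handshake lemma.

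Next I would apply Newton's identities~\eqref{e:Newton} recursively. For $k=1$, the identity gives $c_1 = -c_0 \tr A_G = 0$. For $k=2$,
\[
2c_2 = -(c_1 \tr A_G + c_0 \tr A_G^2) = -\tr A_G^2 = -2e,
\]
so $c_2 = -e$. For $k=4$, substituting the values $c_0=1$, $c_1=0$, $c_2=-e$ as well as $\tr A_G = 0$ and $\tr A_G^2 = 2e$ collapses the identity to
\[
4c_4 = -\bigl(c_3 \tr A_G + c_2 \tr A_G^2 + c_1 \tr A_G^3 + c_0 \tr A_G^4\bigr)
     = -\bigl((-e)(2e) + \tr A_G^4\bigr) = 2e^2 - \tr A_G^4,
\]
which yields $c_4 = \tfrac{1}{2}e^2 - \tfrac{1}{4}\tr A_G^4$, as claimed.

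There is no real obstacle here: the statement is a direct bookkeeping exercise combining the fact that closed walks of length $2$ in a simple graph are in bijection with oriented edges, with a short induction via Newton's identities. The only mild subtlety is that the formula for $c_4$ depends on $\tr A_G^3$ only through the auxiliary $c_3$ term, and this dependence vanishes precisely because $\tr A_G = 0$, which is why no information about $3$-cycles (triangles) appears in the final expression.
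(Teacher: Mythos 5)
Your proof is correct and follows essentially the same route as the paper: compute $\tr A_G$ and $\tr A_G^2$ combinatorially (the paper counts two closed walks per edge where you sum degrees, which is the same thing) and then substitute into Newton's identities for $k=1,2,4$. Your closing remark about why $\tr A_G^3$ drops out is a nice observation not made explicit in the paper, but the argument is otherwise identical.
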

\begin{proof}
Since $G$ has no loops, $\tr A_G = 0$.
Recall that 
$\tr A_G^2$ equals the number of closed walks of length $2$. As $G$
has no loops and parallel edges, each edge contributes exactly two
such walks. The remaining assertions follow directly from Newton's
identities;
indeed,
$c_1 = -\tr A_G = 0$ while
$2 c_2 = -c_1 \tr A_G - \tr A_G^2 = -2e$, and
\[
4 c_4 = -c_3 \tr A_G - c_2 \tr A_G^2 - c_1 \tr A_G^3 - \tr A_G^4 = 
2e^2 - \tr A_G^4 .
\]
\end{proof}

Let us refine the expression for $c_4$.
\begin{lemma} \label{l:G:c4}
Assume that $G$ is simple. Then
\[
\tr A_G^4 = 2e + 4 \sum_{i=1}^{n} \binom{d_i}{2} + 8q,
\]
where $e$ is the number of edges,
$d_i$ is the degree of vertex $i$, and
$q$ is the number of cycles of length $4$ in $G$.
Therefore, 
\[
c_4 = \binom{e}{2} - \sum_{i=1}^{n} \binom{d_i}{2} - 2q .
\]
\end{lemma}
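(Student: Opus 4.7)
The plan is to compute $\tr A_G^4$ by classifying closed walks of length $4$ according to which vertices coincide, and then substitute into the formula from Lemma~\ref{l:G:c124}. Since $\tr A_G^4$ counts the number of ordered sequences $(v_0, v_1, v_2, v_3)$ of vertices such that $v_0v_1$, $v_1v_2$, $v_2v_3$, $v_3v_0$ are all edges of $G$, we partition such walks according to the two coincidence conditions $v_0 = v_2$ and $v_1 = v_3$.

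In detail, I would split the count into four disjoint types based on whether $v_0 = v_2$ and whether $v_1 = v_3$. When both coincidences hold, the walk traces a single edge back and forth, contributing $\sum_v d_v = 2e$ walks. When $v_0 = v_2$ but $v_1 \neq v_3$, the walk is a ``cherry'' $v_0 v_1 v_0 v_3 v_0$, and its count is $\sum_v d_v(d_v - 1) = 2\sum_i \binom{d_i}{2}$; by symmetry, the case $v_1 = v_3$ but $v_0 \neq v_2$ gives the same count. Finally, when $v_0 \neq v_2$ and $v_1 \neq v_3$, the four vertices are all distinct (this uses the simplicity of $G$, ruling out loops and multi-edges that would otherwise force degeneracies), so the walk traces a genuine $4$-cycle; each unordered $4$-cycle yields $8$ such walks (four starting vertices times two directions), contributing $8q$. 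Summing these contributions gives the stated formula for $\tr A_G^4$.

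The formula for $c_4$ then follows by direct substitution into the identity $c_4 = \tfrac{1}{2}e^2 - \tfrac{1}{4}\tr A_G^4$ from Lemma~\ref{l:G:c124} and simplifying $\tfrac{1}{2}e^2 - \tfrac{1}{2}e = \binom{e}{2}$.

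The only subtlety, which is the main thing to be careful about, is the case analysis for the $4$-cycle term: one must verify that $v_0 \neq v_2$ and $v_1 \neq v_3$ together with the edge conditions force all four vertices to be distinct. If, for instance, $v_0 = v_1$ were allowed, the graph would have a loop, which is excluded; and equality $v_0 = v_3$ would force $v_0 = v_2$ via the edges $v_3 v_0$ and $v_2 v_3$ only in a multigraph setting. Once this is noted, the enumeration of $4$-cycles is unambiguous and the bookkeeping is routine.
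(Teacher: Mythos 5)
Your proof is correct and takes essentially the same route as the paper: both decompose the closed walks of length $4$ into those traversing a single edge, a path of length $2$ (a ``cherry''), or a genuine $4$-cycle, contributing $2e$, $4\sum_{i}\binom{d_i}{2}$ and $8q$ respectively, and then substitute into $c_4=\tfrac12 e^2-\tfrac14\tr A_G^4$. Your coincidence analysis on $(v_0,v_1,v_2,v_3)$ simply makes explicit the exhaustiveness of that three-way classification, which the paper justifies by a figure and a reference.
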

\begin{proof}
This can be deduced from the proof of~\cite[Theorem~3.1.5]{CRS10}.
A closed walk of length~$4$ traverses one of the subgraphs shown
in Figure~\ref{fig:walk4}.
The numbers of these subgraphs in $G$
are $e$, $\sum_{i=1}^{n} \binom{d_i}{2}$ and $q$, respectively.
For each vertex of these graphs,
Figure~\ref{fig:walk4} shows the number of closed walks of length $4$
starting at that vertex and traversing the graph, so the total
number of closed walks of length $4$ traversing each graph is
$2$, $4$ or $8$, respectively.
\end{proof}

\begin{figure}
\begin{align*}
\begin{array}{c}
\xymatrix{
{\bullet_1} \ar@{-}[r] & {\bullet_1}
}
\end{array}
&&
\begin{array}{c}
\xymatrix@=1pc{
& {\bullet_2} \ar@{-}[dr] \ar@{-}[dl] \\
{\bullet_1} && {\bullet_1}
}
\end{array}
&&
\begin{array}{c}
\xymatrix@=1pc{
& {\bullet_2} \ar@{-}[dr] \ar@{-}[dl] \\
{\bullet_2} \ar@{-}[dr] && {\bullet_2} \ar@{-}[dl] \\
& {\bullet_2}
}
\end{array}
\end{align*}
\caption{The possible subgraphs traversed by closed walks of length $4$.}
\label{fig:walk4}
\end{figure}
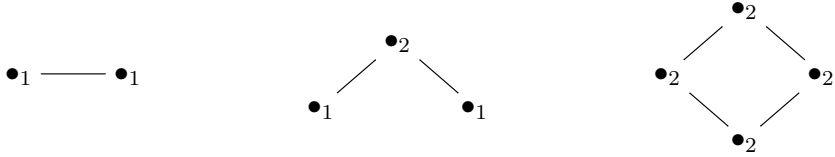

\subsection{Quivers and their Coxeter polynomials}

A quiver $Q$ is a directed graph on vertex set $\{1, 2, \dots, n\}$,
where loops and parallel arrows are allowed at this stage.
Recall that the \emph{adjacency matrix} $N_Q$ of $Q$ is the square matrix
of size $n$ whose entries $(N_Q)_{ij}$ count the number of arrows from
vertex $i$ to vertex $j$ in $Q$.
For any integer $k \geq 0$, the entry $(N_Q^k)_{ij}$ equals the number
of paths of length $k$ from vertex $i$ to vertex $j$.

The underlying graph $G$ of a quiver $Q$ is obtained by forgetting the
orientations of the arrows (i.e.\ replacing each arrow by an edge).
If $Q$ has no loops, then $A_G = N_Q + N_Q^T$.

A quiver is \emph{acyclic} if it does not have oriented cycles of
positive length. In this case $N_Q^n=0$ and in particular $N_Q$ is
nilpotent. Therefore the matrix $C_Q = I_n - N_Q$ is invertible over
$\bZ$ with determinant $1$.

\begin{defn}
The \emph{Coxeter polynomial} $\phi_Q(x)$ of an acyclic quiver $Q$
is defined as the Coxeter polynomial of the matrix $C_Q$.
By Lemma~\ref{l:Cox:S}, $\phi_Q(x) = \det(x I_n + C_Q^{-1} C_Q^T)$.
\end{defn}

If $Q$ is acyclic with adjacency matrix $N_Q$, then the $(i,j)$-entry
in the matrix $I_n + N_Q + N_Q^2 + \dots + N_Q^{n-1}$ counts the
total number of oriented paths starting at vertex $i$ and ending at $j$.
This matrix is known as the \emph{Cartan matrix} of the path algebra
of $Q$, and its inverse is the matrix
$C_Q = I_n - N_Q$, which is related to the Euler form of that algebra.
Lemma~\ref{l:Cox:Z} now implies that the
Coxeter polynomial $\phi_Q(x)$ may alternatively be defined as
the Coxeter polynomial of the Cartan matrix corresponding to $Q$.

\begin{defn}
A quiver is \emph{bipartite} if its vertex set can be partitioned into
two disjoint subsets $V' \sqcup V''$ such that all arrows start at $V'$
and end at $V''$. 
\end{defn}

The relationship between the Coxeter polynomial of a bipartite quiver
and the characteristic polynomial of its underlying graph was discovered
by A'Campo, and is formalized in the following proposition.

\begin{prop}[\cite{ACampo76}] \label{p:Q:G}
Let $Q$ be a bipartite quiver with underlying graph $G$. Then
\[
\phi_Q(x^2) = x^n p_G(x + x^{-1}) .
\]
\end{prop}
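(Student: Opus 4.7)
The plan is to reduce Proposition~\ref{p:Q:G} to Theorem~\ref{t:NN} by taking $N$ to be the adjacency matrix $N_Q$ of the bipartite quiver $Q$. The main point is that the bipartite assumption forces $N_Q^2 = 0$, which is exactly the hypothesis needed to apply Theorem~\ref{t:NN}.

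First I would verify the two structural consequences of bipartiteness. Let $V' \sqcup V''$ be the bipartition. Since every arrow starts in $V'$ and ends in $V''$, in particular no arrow has both endpoints in the same set; hence $Q$ has no loops. Moreover, the target of any arrow lies in $V''$, while the source of any arrow lies in $V'$, so no two arrows are composable and $N_Q^2 = 0$. This is the single place where bipartiteness is used. Because $Q$ has no loops, the observation from the subsection on quivers applies and gives $A_G = N_Q + N_Q^T$, so $p_G(x) = p_{N_Q + N_Q^T}(x)$.

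Next I would simply invoke Theorem~\ref{t:NN} with $N = N_Q$ and $C = C_Q = I_n - N_Q$. This yields
\[
\phi_{C_Q}(x^2) = x^n p_{N_Q + N_Q^T}(x + x^{-1}) = x^n p_G(x + x^{-1}).
\]
By the definition of the Coxeter polynomial of an acyclic quiver, $\phi_Q(x) = \phi_{C_Q}(x)$, which gives the stated identity.

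There is no real obstacle here: the substantive content is already packaged in Proposition~\ref{p:N1N2} and Theorem~\ref{t:NN}, and the only genuinely quiver-theoretic step is the combinatorial observation that a bipartite orientation has $N_Q^2 = 0$ and no loops. If one wanted to be pedantic, one could alternatively verify $N_Q^2 = 0$ by block-decomposing $N_Q$ according to $V' \sqcup V''$ into the form $\begin{pmatrix} 0 & B \\ 0 & 0 \end{pmatrix}$, from which $N_Q^2 = 0$ is immediate; but this is just a reformulation of the same observation.
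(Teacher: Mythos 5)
Your proposal is correct and follows exactly the paper's argument: the paper's own proof simply notes that $N_Q^2 = 0$ because $Q$ is bipartite and then applies Theorem~\ref{t:NN} to $N = N_Q$. You spell out the no-loops observation (needed for $A_G = N_Q + N_Q^T$) and the block-triangular justification of $N_Q^2 = 0$ more explicitly, but the route is the same.
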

\begin{proof}
$N_Q^2=0$ since $Q$ is bipartite.
Now apply Theorem~\ref{t:NN} for the matrix $N_Q$.
\end{proof}

A vertex $s$ in a quiver is called a \emph{sink} if there are no arrows
starting at $s$. Similarly, $s$ is called a \emph{source} if there are
no arrows ending at $s$. Thus, a quiver is bipartite if every vertex is
a sink or a source.
If $s$ is a sink or a source in a quiver $Q$, the
\emph{reflection} at $s$ is the quiver $Q'$ obtained by reversing 
the direction of all arrows incident to $s$, see~\cite{BGP73}.
It is well-known that if $Q$ is acyclic, then so is $Q'$, and the
matrices $C_Q$ and $C_{Q'}$ are congruent over $\bZ$.
While this fact is a consequence of the derived equivalence between the path
algebras of these quivers~\cite{Happel88} and the invariance of the Euler
form under derived equivalences, it can also be verified directly.
Consequently, the Coxeter polynomials $\phi_Q(x)$ and $\phi_{Q'}(x)$
coincide.

Any two orientations of a tree $T$ are related by a sequence of 
reflections~\cite{BGP73}. As reflections preserve the Coxeter polynomial,
it follows that all orientations of $T$ share the same Coxeter polynomial,
which we denote by $\phi_T(x)$.

Finally, recall that two arrows in a quiver are \emph{parallel}
if they start at the same vertex and end at the same vertex.
The adjacency matrix of a quiver without parallel arrows
has its entries in $\{0, 1\}$.

\section{Proofs of the main results}
\label{sec:proofs}

\subsection{Proof of Theorem~\protect{\ref{t:Q}}}

If $Q$ is a bipartite quiver on $n$ vertices with underlying graph $G$,
then $\phi_Q(x^2) = x^n p_G(x+x^{-1})$ by Proposition~\ref{p:Q:G},
so we can write
\[
p_G(x) = x^n + c_2 x^{n-2} + c_4 x^{n-4} + \dots
\]
by Proposition~\ref{p:represent}.
Our hypotheses on $Q$ imply that the graph $G$ is simple with $q$
cycles of length $4$, hence by Lemma~\ref{l:G:c124} and Lemma~\ref{l:G:c4},
\begin{align*}
c_2 = -e &,&
c_4 = \frac{e(e-1)}{2} - 2q - \sum_{i=1}^{n} \binom{d_i}{2} ,
\end{align*}
where $e$ is the number of arrows of $Q$ and $d_i$ is the degree of
vertex $i$.

Write now
\[
\phi_Q(x) = x^n + a_1 x^{n-1} + a_2 x^{n-2} + \dots
\]
By Proposition~\ref{p:a:c},
$a_1 = n + c_2 = n - e$ and
\begin{align*}
a_2 &= \binom{n}{2} + (n-2) c_2 + c_4 \\
&= \frac{n(n-1)}{2} + (n-2) (-e) + \frac{e(e-1)}{2} - 2q
- \sum_{i=1}^{n} \binom{d_i}{2} \\
&= \frac{1}{2}
\left( n(n-1) - (2n-3)e + e^2 \right) -2q
- \sum_{i=1}^{n} \binom{d_i}{2} \\
&= \frac{(n-e)(n-1-e)}{2} + e - 2q - \sum_{i=1}^{n} \binom{d_i}{2} ,
\end{align*} 
completing the proof of Theorem~\ref{t:Q}.

To obtain the expression~\eqref{e:Q:a2v} for $a_2$,
start by rewriting the sum as
\[
\sum_{i=1}^{n} \binom{d_i}{2} = 
\sum_{i=1}^{n} \frac{d_i(d_i-1)}{2} =
\sum_{i=1}^{n} \frac{(d_i-1)(d_i-2)}{2} + 2e - n
\]
and observe that if $d_i=0$, the term $(d_i-1)(d_i-2)/2$
contributes $1$ to the sum, while if $d_i>0$, it equals the binomial
coefficient $\binom{d_i-1}{2}$. Therefore
\begin{align*}
a_2 &= \frac{1}{2}
\left( n(n-1) - (2n-3)e + e^2 \right)
- 2q - 2e + n - \sum_{i=1}^n \frac{(d_i-1)(d_i-2)}{2} \\
&= \frac{1}{2} \left ( n(n+1) - (2n+1)e + e^2 \right)
- 2q - v - \sum_i \binom{d_i-1}{2} \\
&= \frac{(n-e)(n+1-e)}{2} - v - 2q - \sum_i \binom{d_i-1}{2} ,
\end{align*}
where $v$ is the number of isolated vertices in $Q$ 
and the sum runs over the non-isolated vertices.

\subsection{Proof of Corollary~\protect{\ref{c:T}}}

Any tree admits a bipartite orientation.
To get Corollary~\ref{c:T}, use the expression~\eqref{e:Q:a2v} and note
that for a tree with $n$ vertices, $e=n-1$ and there are no isolated vertices.

\bibliographystyle{amsplain}
\bibliography{coeff}

\end{document}